\documentclass[11pt]{article}

\usepackage{setspace}
\usepackage{a4wide}
\usepackage{amsthm, amssymb, amstext}
\usepackage[fleqn]{amsmath}
\usepackage{latexsym}
\usepackage{graphicx}
\usepackage{comment}
\usepackage{hyperref}
\usepackage{mathtools}
\usepackage{enumerate} 

\usepackage{todonotes}
\usepackage{comment}

\newtheorem{theorem}{Theorem}[section]
\newtheorem{lemma}{Lemma}[section]

\newtheorem{corollary}{Corollary}[section]

\newtheorem{problem}{Problem}

\theoremstyle{definition}
\newtheorem{definition}{Definition}[section]

\def\inst#1{$^{#1}$}

\title{Reconfiguration Graph for Vertex Colourings of Weakly Chordal Graphs}
\date{}
\author{Carl Feghali\inst{1}\thanks{Email: \texttt{carl.feghali@uib.no}, supported by the Research Council of Norway via the project CLASSIS} \and Ji\v{r}\'{\i} Fiala\inst{2}\thanks{Email: \texttt{fiala@kam.mff.cuni.cz}, supported by the Czech Science Foundation (GA-\v{C}R) project 17-09142S.}}

\begin{document}  

\maketitle
\begin{center}
{\small
\inst{1}
Institutt for informatikk,\\
Universitetet i Bergen, Norway\\
\smallskip

\inst{2}
Department of Applied Mathematics,\\
Charles University, Prague\\
}
\end{center}

\begin{abstract}
The reconfiguration graph $R_k(G)$ of the $k$-colourings of a graph $G$ contains as its vertex set the $k$-colourings of $G$ and two colourings
are joined by an edge if they differ in colour on just one vertex of $G$. 

We show that for each $k \geq 3$ there is a $k$-colourable weakly chordal graph $G$ such that $R_{k+1}(G)$ is disconnected. We also introduce a subclass of $k$-colourable weakly chordal graphs which we call $k$-colourable compact graphs and show that for each $k$-colourable compact graph $G$ on $n$ vertices, $R_{k+1}(G)$ has diameter $O(n^2)$. We show that this class contains all $k$-colourable co-chordal graphs and when $k = 3$ all $3$-colourable $(P_5, \overline{P_5}, C_5)$-free graphs. We also mention some open problems. 
\end{abstract}

\section{Introduction}

Let $G$ be a graph, and let $k$ be a non-negative integer. 
A $k$-colouring of $G$ is a function $f: V(G) \rightarrow \{1, \dots, k\}$ such that $f(u) \not= f(v)$ whenever $(u, v) \in E(G)$. 
The reconfiguration graph $R_k(G)$ of the $k$-colourings of $G$ has as vertex set the set of all $k$-colourings of $G$ and two vertices of $R_k(G)$ are adjacent if they differ on the colour of exactly one vertex (the change of the colour is the so called \emph{colour switch}). For a positive integer $\ell$, the \emph{$\ell$-colour diameter} of a graph $G$ is the diameter of $R_{\ell}(G)$. 

In the area of reconfigurations for colourings of graphs, one focus is to determine the complexity of deciding whether two  given colourings of a graph can be transformed into one another by a sequence of recolourings (that is, to decide whether there is a path between these two colourings in the reconfiguration graph); see, for example, \cite{3colouring, johnson2, brewster, bonsma}. Another focus is to determine the diameter of the reconfiguration graph in case it is connected or the diameter of its components if it is disconnected \cite{bonamy, johnson1, bonamy13, bousquet, feghali2}. We refer the reader to \cite{heuvel, nishimura} for excellent surveys on reconfiguration problems.

 In this note, we continue the latter line of study of reconfiguration problems. In Section \ref{sec:weakly}, we shall show that the $(k + 1)$-colour diameter of $k$-colourable weakly chordal graphs can be infinite.  On the positive side, in Section \ref{sec:quadratic}, we shall consider two specific subclasses of $k$-colourable perfect graphs and show that their $(k + 1)$-colour diameter is quadratic in the order of the graph.

\section{Preliminaries}

For a graph $G = (V, E)$ and a vertex $u \in V$, let $N_G(u) = \{v: uv \in E\}$. A \emph{separator} of a graph $G = (V,E)$ is a set $S \subset V$
such that $G -  S$ has more connected components than $G$. If two vertices $u$ and $v$
that belong to the same connected component in $G$ are in two different connected
components of $G - S$, then we say that $S$ \emph{separates} $u$ and $v$. A \emph{chordless path} $P_n$ of length $n - 1$ is the graph with vertices $v_1, \dots, v_n$ and edges $v_iv_{i+1}$ for
$i = 1, \dots,n -1$. It is a \emph{cycle} $C_n$ of length $n$ if the edge $v_1v_n$ is also present.  

The \emph{complement} of $G$ is denoted $\overline{G} = (V, \overline{E})$. It is the graph on the same vertex set as $G$ and there is an edge in $G$ between two vertices $u$ and $v$ if and only if there is no edge between $u$ and $v$ in $\overline{G}$. A set of vertices in a graph is \emph{anticonnected} if it induces a graph whose complement is connected. A \emph{clique} or a \emph{complete graph} is a graph where every pair
of vertices is joined by an edge. The size of a largest clique in a graph $G$ is denoted $\omega(G)$. The \emph{chromatic number} $\chi(G)$ of a graph $G$ is the least integer $k$ such that $G$ is $k$-colourable. 

A graph $G$ is \emph{perfect} if $\omega(G') = \chi(G')$ for every (not necessarily proper) subgraph $G'$ of $G$.  A \emph{hole} in a graph is a cycle of length at least 5 and an \emph{antihole} is the complement of a hole. A graph is perfect if it is (odd hole, odd antihole)-free \cite{chudnovsky}. A graph is \emph{weakly chordal} if it is (hole, antihole)-free. A graph is \emph{co-chordal} if it is ($\overline{C_4}$, anti-hole)-free. Every weakly chordal graph is perfect. Every co-chordal graph and every $(P_5, \overline{P_5}, C_5)$-free graph is weakly chordal. 

A \emph{2-pair} of a graph $G$ is a pair $\{x, y\}$ of nonadjacent distinct vertices of $G$ such that every chordless path from $x$ to $y$ has length 2. We often use the following well-known lemma: 

\begin{lemma}[Hayward et al. \cite{hayward}]\label{lem:hayward}
A graph $G$ is weakly chordal graph if and only if every subgraph of $G$ is either a complete graph or it contains a 2-pair. 
\end{lemma}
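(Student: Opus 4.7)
The plan is to prove both implications separately; the substantive content lies in the forward direction.

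For the easy direction, that a graph satisfying the stated structural condition is weakly chordal, I would argue contrapositively: if $G$ contains an induced hole $C_n$ or antihole $\overline{C_n}$ with $n \geq 5$, then that induced subgraph is itself neither complete nor contains a 2-pair, contradicting the hypothesis applied to that subgraph. In a hole, any two non-adjacent vertices are joined by two chordless subpaths of the cycle, at least one of length $\geq 3$. In an antihole $\overline{C_n}$, the non-adjacent pairs are exactly the edges of the underlying $C_n$, and for any such pair one can exhibit a chordless path of length $3$ using suitable non-consecutive indices (for example $v_1 v_3 v_5 v_2$ when $n=5$, with an analogous choice for $n \geq 6$).

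For the forward direction, since the class of weakly chordal graphs is closed under induced subgraphs, it suffices to show that every non-complete weakly chordal graph contains a 2-pair. I would proceed by contradiction: assume $G$ is weakly chordal, not complete, and contains no 2-pair. Among all non-adjacent pairs, choose $\{x,y\}$ maximising the size of the common neighbourhood $M := N_G(x)\cap N_G(y)$. Since $\{x,y\}$ is not a 2-pair, there is an induced $xy$-path $P = x, p_1, \ldots, p_k, y$ with $k \geq 2$. The strategy is then to analyse how the vertices of $M$ attach to the interior of $P$ and use this structure to extract a forbidden hole or antihole.

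The main obstacle is precisely this last step. A first, easy observation is that if some $z \in M$ had no neighbour among $p_1, \ldots, p_k$, then $x, p_1, \ldots, p_k, y, z$ would be an induced cycle of length $\geq 5$, contradicting weak chordality; hence every vertex of $M$ has at least one neighbour on the interior of $P$. One then pushes the argument further by swapping $x$ (or $y$) with a carefully chosen $p_i$: the pair $\{p_i, y\}$ is non-adjacent and, by maximality of $|M|$, must have common neighbourhood of size at most $|M|$; a careful accounting of vertices gained and lost when comparing $M$ with $N_G(p_i)\cap N_G(y)$ is designed to force an anticonnected substructure inside $M \cup V(P)$, which in turn yields an antihole. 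Making this bookkeeping rigorous without implicitly re-using the very no-2-pair assumption one is trying to contradict is the principal technical difficulty, and is the reason this lemma is credited to Hayward et al.\ rather than being a routine observation.
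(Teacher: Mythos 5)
The paper never proves this lemma: it is imported verbatim from Hayward, Ho\`ang and Maffray and used as a black box, so there is no in-paper argument to measure yours against; the only question is whether your proposal is itself a proof. Your backward direction is fine: a hole or antihole is not complete, and no nonadjacent pair in it is a 2-pair (in $C_n$, $n\ge 5$, one of the two arcs joining a nonadjacent pair is a chordless path of length at least $3$; in $\overline{C_n}$ the nonadjacent pairs are the consecutive pairs $\{v_1,v_2\}$ and $v_1v_3v_nv_2$ is a chordless path of length $3$ for every $n\ge 5$). So the structural condition fails for any induced hole or antihole, which gives the ``if'' direction.

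The forward direction --- every non-complete weakly chordal graph contains a 2-pair --- is the entire content of the lemma, and your proposal does not establish it. You fix a nonadjacent pair $\{x,y\}$ maximising $|N_G(x)\cap N_G(y)|$, correctly observe that each common neighbour must attach to the interior of a long induced $x$--$y$ path, and then describe, in the conditional mood, a ``careful accounting'' that ``is designed to force'' an antihole, while explicitly conceding that you have not made this rigorous. That concession is the gap: the step you leave open is exactly where the theorem lives, and there is no evident reason why your particular extremal choice leads to a contradiction --- the published proofs do not go this way. Hayward, Ho\`ang and Maffray's original argument is a genuinely involved induction, and a cleaner modern route is precisely the machinery this paper itself quotes as Lemma~\ref{lem:ct} (Trotignon and Vu\v{s}kovi\'c): grow a maximal anticonnected set $T$ around the centre of an induced path of length $2$, note that $D(T)$ is not a clique, and extract the 2-pair inside $D(T)$ as in Corollary~\ref{cor:p5}. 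As written, your text is a plan for a proof of the hard direction, not a proof of it.
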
 

\section{Weakly chordal graphs}\label{sec:weakly}


In this section, we establish the following result. 

\begin{theorem}\label{thm:weakly}
For each $k \geq 3$ there exists a $k$-colourable weakly chordal graph $G_k$ such that $R_{k + 1}(G_k)$ is disconnected. 
\end{theorem}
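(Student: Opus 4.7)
The strategy is to exhibit, for every $k \ge 3$, a specific $(k+1)$-colouring of $G_k$ which is \emph{frozen}: a proper colouring in which every vertex sees each of the other $k$ colours among its neighbours, so that no single-vertex recolouring is possible. Such a colouring is an isolated vertex of $R_{k+1}(G_k)$. Since $G_k$ is $k$-colourable, there are many other $(k+1)$-colourings --- any proper $k$-colouring is trivially also a $(k+1)$-colouring that uses at most $k$ colours and is therefore distinct from the frozen one --- so $R_{k+1}(G_k)$ has at least two vertices and the presence of an isolated one forces disconnection.

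The construction proceeds in two stages. In the base case I would build an explicit $G_3$ together with a frozen $4$-colouring, whose colour classes I denote $A_1, A_2, A_3, A_4$. Three constraints must be reconciled: (i) every vertex has a neighbour in each class other than its own (freezing); (ii) $G_3$ is $K_4$-free, so that $\chi(G_3) \le 3$; and (iii) $G_3$ is weakly chordal. The tension between (i) and (ii) forces a sparse edge set, while (iii) rules out many natural candidates --- for instance the crown graph $K_{4,4}$ minus a perfect matching contains the induced $6$-cycle $u_1 v_2 u_3 v_1 u_2 v_3 u_1$, the triangular prism is the complement of $C_6$ and hence itself an antihole, and the wheel $W_6$ has $C_6$ as an induced subgraph. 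My plan is to take each $A_i$ of size two, equip the vertex set with a minimal collection of cross-class edges so that each vertex sees every other colour, and then adjoin further chords to destroy each potential induced $C_\ell$ with $\ell \ge 5$ without creating a $K_4$. Weak chordality is finally verified via Lemma~\ref{lem:hayward}: in every induced subgraph one exhibits either a complete structure or a 2-pair, which for a small $G_3$ is a finite, mechanical check.

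For the inductive step $k \ge 4$, let $G_k$ be the join of $G_{k-1}$ with a single new vertex $v$: that is, $V(G_k) = V(G_{k-1}) \cup \{v\}$ and $v$ is adjacent to every vertex of $G_{k-1}$. Three observations carry the induction. First, $\chi(G_k) = \chi(G_{k-1}) + 1 = k$, since $v$ forces a fresh colour. Second, $G_k$ is weakly chordal: no induced cycle of length at least four can contain $v$ (otherwise $v$'s universal adjacency produces a chord to the opposite cycle vertex), so the holes of $G_k$ are exactly those of $G_{k-1}$; dually, $v$ is isolated in $\overline{G_k}$, so the antiholes of $G_k$ restrict to those of $G_{k-1}$. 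Third, extending the frozen $k$-colouring of $G_{k-1}$ by painting $v$ with a new $(k+1)$-st colour yields a frozen $(k+1)$-colouring of $G_k$: every old vertex now sees colour $k+1$ through $v$ while retaining its original $k-1$ other-colour neighbours, and $v$ itself sees all $k$ old colours.

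The main obstacle is the base case: producing a concrete $G_3$ on finitely many vertices that simultaneously meets the frozen condition, avoids $K_4$, and contains no induced $C_\ell$ or $\overline{C_\ell}$ with $\ell \ge 5$. Once $G_3$ is in hand, the inductive step and the disconnectedness conclusion are essentially formal.
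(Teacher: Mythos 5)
Your overall strategy is exactly the paper's: produce a frozen $(k+1)$-colouring (one in which every vertex sees all $k$ other colours in its neighbourhood), note that it is an isolated vertex of $R_{k+1}(G_k)$, and conclude disconnectedness because the $k$-colourability of $G_k$ guarantees other $(k+1)$-colourings exist. Your inductive step is also sound and is in fact a pleasant simplification: joining a universal vertex $v$ to $G_{k-1}$ does raise the chromatic number by one, does preserve weak chordality (a hole cannot contain a universal vertex, and $v$ is isolated in $\overline{G_k}$ so antiholes avoid it), and does extend a frozen $k$-colouring of $G_{k-1}$ to a frozen $(k+1)$-colouring of $G_k$ by giving $v$ the new colour. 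The paper instead constructs all the $G_k$ uniformly and reduces the weak-chordality verification to $G_3$ by observing that twin vertices cannot both lie in a hole or antihole; your induction achieves the same reduction more cleanly.

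The genuine gap is the base case, and you acknowledge it yourself: you never exhibit $G_3$. Everything hard about this theorem is concentrated there, because the three constraints you list (frozen $4$-colouring, $K_4$-freeness, and no induced $C_\ell$ or $\overline{C_\ell}$ with $\ell\ge 5$) pull against each other, and your proposed recipe --- four colour classes of size two plus a ``minimal collection of cross-class edges'' and then ``further chords'' --- is a search heuristic, not a construction; you give no candidate edge set and no verification that the chords you would add to kill long induced cycles do not themselves create a $K_4$ or an antihole. For comparison, the paper's $G_3$ has ten vertices, not eight: four disjoint edges $u_1u_2$, $v_1v_2$, $w_1w_2$, $z_1z_2$ together with $x$ and $y$, where $x,y$ are joined to all $u_i,v_i$, where $u_1,v_1$ are joined to all $w_i,z_i$, and with the two extra edges $xz_1$ and $yw_1$; checking that this is $K_4$-free, weakly chordal, and admits a frozen $4$-colouring is itself a nontrivial case analysis occupying most of the paper's proof. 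Until you write down an explicit $G_3$ and carry out that verification (Lemma~\ref{lem:hayward} helps, but ``a finite, mechanical check'' still has to be done), the proof is not complete.
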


The graph $G_k$ is depicted in Figure~\ref{fig:counterexample}.

\begin{figure}
\begin{center}
\scalebox{1.2}{\includegraphics[page=2]{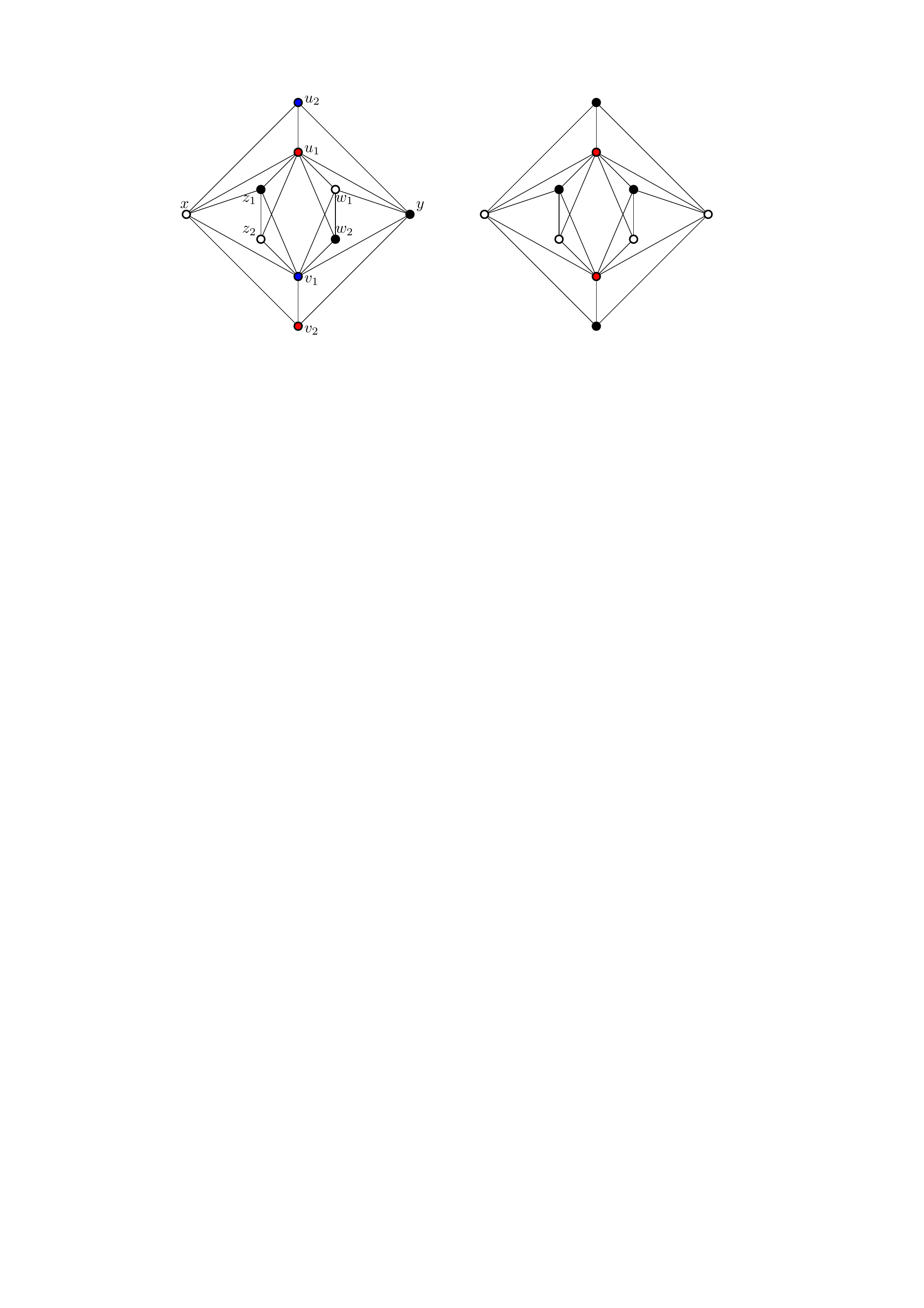}}
\end{center}
\caption{The graph $G_k$. Each gray area corresponds to a clique.}
\label{fig:counterexample}
\end{figure}

In other words, Theorem \ref{thm:weakly} states that for each $k \geq 3$ the $(k+1)$-colour diameter of $k$-colourable weakly chordal graphs can be infinite.  
It is worth mentioning that the case $k = 2$ is already known \cite{bonamy} as the class of 2-colourable weakly chordal graphs is precisely the class of chordal bipartite graphs. 
It is also worth mentioning that Bonamy, Johnson, Lignos, Patel and Paulusma \cite{bonamy} asked whether the $(k + 1)$-colour diameter of $k$-colourable perfect graphs is connected. This was answered negatively in \cite{treewidth} -- the counterexample consists of a complete bipartite graph minus a matching. Our Theorem \ref{thm:weakly} thus strengthens this counterexample.

\begin{proof}[Proof of Theorem \ref{thm:weakly}]
It suffices to construct for each $k \geq 3$ a $k$-colourable weakly chordal graph $G_k$ and a $(k+1)$-colouring of $G$ such that each of the $k + 1$ colours appear in the closed neighbourhood of every vertex of $G_k$, as then no vertex of $G_k$ can get recoloured. 

Such graph is depicted in Figure~\ref{fig:counterexample}. It is formed from the disjoint union of four complete graphs $K_{k-1}$, one on vertices $u_i$ for $i\in \{1,\dots,k-1\}$, the other three on vertices $v_i,w_i,z_i$, respectively, and two further vertices $x$ and $y$. These parts are joined together by additional edges such that $x$ and $y$ are connected to each $u_i$ and to each $v_i$; $u_1$ and $v_1$ are connected to each $z_i$ and to each $w_i$; and finally, $G_k$ contains two further edges $xz_1$ and $yw_1$.

A possible $k$-colouring of $G_k$ is schematically shown on the left side of Figure~\ref{fig:counterexample}; in both pictures each 4-tuple $u_i,v_i,w_i,z_i$ for $i\in\{2,\dots,k-2\}$ receives a unique colour. 
On the other hand, in the $(k+1)$-colouring depicted on the right, every vertex of $G_k$ has its neighbours coloured by the $k$ remaining colours, hence no vertex can be recoloured.
Hence, this colouring corresponds to an isolated vertex in the reconfiguration graph, and thus the reconfiguration graph $R_{k+1}(G_k)$ is disconnected.

It remains to show that $G_k$ is weakly chordal. Observe first that any distinct $u_i,u_j$ with $i,j\in \{2,\dots,k-1\}$ have the same neighbourhood, hence no hole and no antihole may contain both of them. The same holds for vertices $v_i,w_i,z_i$, respectively. Hence without loss of generality we may assume that no vertex with index at least three participates in a hole nor in an antihole. In other words, it suffices to restrict ourselves only to the graph $G_3$ to show that it is weakly chordal.

By examining possible paths, one can also realise that vertices $x,y$ as well as $u_1,v_1$ form a 2-pair.
Since no hole may contain a 2-pair, we may assume without loss of generality that a possible hole does not contain $y$ and $v_1$. The vertex $x$ separates $v_2$ and also the vertex $u_1$ separates $w_1,w_2$ from the rest in the graph $G_3 - \{y, v_1\}$, hence $v_2,w_1,w_2$ also do not belong to a hole. We were left with five vertices $x,u_1,u_2,z_1,z_2$ which do not induce a hole, hence $G_3$ does not contain a hole at all.

Now assume for a contradiction that $G_3$ contains an antihole on at least 6 vertices. The graph $G_3$ contains only 6 vertices of degree at least 4, hence the antihole contains exactly 6 vertices. The neighbourhood of $u_2$ induces a diamond (a $K_4$ minus an edge), hence it does not belong to the antihole as no diamond is an induced subgraph of $\overline{C_6}$. The same holds for $v_2,z_2,w_2$. We are left with vertices $x,y,u_1,v_1,w_1,z_1$ which do not induce an antihole in $G_3$, hence $G_3$ does not contain an antihole at all. Since $G_3$ is weakly chordal, we have shown that $G_k$ is also weakly chordal for each~$k\ge 4$.
\end{proof}

\section{Quadratic diameter}\label{sec:quadratic}

In this section, we introduce a subclass of $k$-colourable weakly chordal graphs that we call $k$-colourable {compact graphs}.  We show  in Theorem \ref{thm:strong}  that for each $k$-colourable compact graph $G$ on $n$ vertices the diameter of $R_{k + 1}(G)$ is $O(n^2)$. We then show in Lemma \ref{lem:co-chordal} that $k$-colourable co-chordal graphs are $k$-colourable compact and in Lemma \ref{lem:p5} that $3$-colourable $(P_5, \overline{P_5}, C_5)$-free graphs are $3$-colourable compact. 

For a 2-pair $\{u , v\}$ of a weakly chordal graph $G$, let $S(u, v) = N_G(u) \cap N_G(v)$. Note that, by the definition of a 2-pair, $S(u, v)$ is a separator of $G$ that separates $u$ and $v$. Let $C_v$ denote the component of $G \setminus S(u, v)$ that contains the vertex $v$. 

\begin{definition}\label{def:colourcompact}
A  weakly chordal graph $G$ is said to be \emph{compact} if every subgraph $H$ of $G$ either
\begin{itemize}
\item[(i)]  is a complete graph,  or 
\item[(ii)] contains a 2-pair $\{x, y\}$ such that $N_H(x) \subseteq N_H(y)$, or
\item[(iii)] contains a 2-pair $\{x, y\}$ such that $C_x \cup S(x, y)$ induces a clique on at most three vertices.    
\end{itemize}
\end{definition}

\begin{theorem}\label{thm:strong}
Let $k$ be a positive integer, and let $G$ be a $k$-colourable compact graph on $n$ vertices. Then $R_{k+1}(G)$ has diameter $O(n^2)$. 
\end{theorem}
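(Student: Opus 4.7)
My plan is to proceed by induction on $n$, the number of vertices of $G$. The base case $n = 1$ is trivial since $R_{k+1}(G)$ consists of $k+1$ isolated vertices. For the inductive step, I would apply Definition~\ref{def:colourcompact} to $G$ itself, obtaining one of three structural cases. In Case~(i), $G = K_n$ with $n \leq k$ (since $G$ is $k$-colourable), so there is always a spare colour and a standard token-sliding argument on cliques yields diameter $O(n)$. In Cases~(ii) and~(iii) my plan is to reduce to $G - x$, where $x$ is the ``nice'' vertex supplied by the compact property. Since any subgraph of a compact graph is itself compact (the defining condition quantifies over all subgraphs, and weak chordality is hereditary), $G - x$ is again compact and $k$-colourable, so the inductive hypothesis applies.

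In Case~(ii), with $\{x, y\}$ a 2-pair satisfying $N(x) \subseteq N(y)$, I would maintain the invariant $c(x) = c(y)$ throughout the lift. First, recolour $x$ from $\alpha(x)$ to $\alpha(y)$, which is valid because any colour valid at $y$ is valid at $x$. Then execute the inductive reconfiguration sequence on $G - x$ from $\alpha|_{G-x}$ to $\beta|_{G-x}$, and each time $y$ is recoloured from some $a$ to some $b$, immediately follow with a recolouring of $x$ from $a$ to $b$. A single final recolouring of $x$ to $\beta(x)$ completes the sequence. In Case~(iii), where $C_x \cup S(x,y)$ induces a clique of at most $3$ vertices, the vertex $x$ has degree at most $2$; the sub-cases with $C_x = \{x\}$ reduce to Case~(ii) because then $N(x) \subseteq S(x,y) \subseteq N(y)$. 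The remaining sub-case forces $\{x, x', s\}$ to form a triangle, so $k \geq 3$ and $k + 1 \geq 4$. There the lift is similar: whenever a neighbour of $x$ would conflict with $c(x)$, first recolour $x$ to a safe colour, which always exists because $|N(x)| + 1 \leq 3 < k + 1$.

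The main obstacle will be the careful counting of moves. The naive bookkeeping above charges one extra $x$-move per $y$-recolouring, giving the recurrence $d(n) \leq 2\,d(n-1) + O(1)$, which is only exponential in $n$. To obtain the claimed $O(n^2)$ bound I would strengthen the inductive hypothesis: not only does $R_{k+1}(G)$ have diameter $O(n^2)$, but for every vertex $v$ (or, to handle Case~(iii), every pair of vertices) one can choose a witnessing reconfiguration sequence in which $v$ is recoloured at most $O(n)$ times. Under this stronger hypothesis, the lift in Case~(ii) inserts only $O(n)$ extra $x$-recolourings, matching the $O(n)$ recolourings of $y$ in the inductive sequence, yielding the additive recurrence $d(n) \leq d(n-1) + O(n)$ and hence $d(n) = O(n^2)$. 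Case~(iii) is handled analogously using the bound on $N(x)$-recolourings, with $|N(x)| \leq 2$. Verifying that this per-vertex (or per-pair) bound is preserved by the lift, particularly in Case~(iii) where two distinguished neighbours of $x$ are involved, is the crux of the argument.
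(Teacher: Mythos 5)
Your proposal is essentially the paper's own proof: the authors also induct on $n$ using the three cases of Definition~\ref{def:colourcompact}, cite the clique case, lift the reconfiguration sequence from $G-x$ (resp.\ $G-\{x,w\}$) by recolouring $x$ in lockstep with $y$ (resp.\ with the single separator vertex $z$), and crucially they prove the same strengthened inductive statement you identify — that every vertex is recoloured at most $2n$ times — which turns the naive exponential recurrence into the additive one giving $O(n^2)$. The only cosmetic difference is that in case~(iii) the paper does not need a per-pair bound: it simply recolours $x$ and $w$ once each time $z$ takes one of their colours, so their recolouring counts are bounded by that of $z$ plus a constant.
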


\begin{proof}
It suffices to show that we can recolour a $(k+1)$-colouring $\alpha$ of $G$ to a $(k+1)$-colouring $\beta$ by recolouring every vertex at most $2n$ times. 

We first suppose that $G$ is a complete graph. In this case, we know from \cite{bonamy} that we can recolour $\alpha$ to $\beta$ by recolouring every vertex at most $2n$ times. We now consider the case when $G$ is not a complete graph but satisfies condition (ii) of compact graphs. We use induction on the number of vertices of $G$. Let $\{x, y\}$ be a 2-pair of $G$ such that $N_G(x) \subseteq N_G(y)$. From $\alpha$ and $\beta$, we can immediately recolour $x$ with, respectively, $\alpha(y)$ and $\beta(y)$. Let $G' = G - \{x\}$, and let $\alpha_{G'}$ and $\beta_{G'}$ denote the restrictions of $\alpha$ and $\beta$ to $G'$. By our induction hypothesis, we can transform $\alpha_{G'}$ to $\beta_{G'}$ by recolouring every vertex at most $2(n - 1)$ times. We can extend this sequence of recolourings to a sequence of recolourings in $G$ by recolouring $x$ using the same recolouring as $y$. Then $x$ gets recoloured as many times as $y$ as needed. 

\begin{figure}
\begin{center}
\scalebox{1.2}{\includegraphics{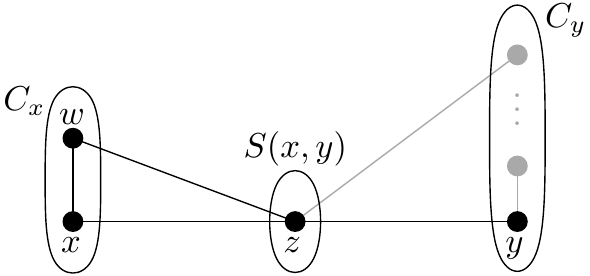}}
\end{center}
\caption{For the proof of Theorem~\ref{thm:strong}. The component $C_y$ contains further vertices and edges outlined in gray.}
\label{fig:quadratic}
\end{figure}

Suppose that $G$ satisfies condition (iii) of compact graphs. We use induction on the number of vertices. If $S(x, y)$ contains exactly two vertices, then $C_x = \{x\}$ and hence $G$ satisfies condition (ii) of compact graphs.  So we can assume that $S(x, y)$ is a single vertex $z$ and $C_x$ consists of $x$ and another vertex $w$, see Figure~\ref{fig:quadratic}. From $\alpha$ and $\beta$, we can recolour $x$ and $w$ to another colour either immediately or by first recolouring $w$ and $x$, respectively. Let $G^* = G - \{x, w\}$. By our induction hypothesis, we can transform $\alpha_{G^*}$ to $\beta_{G^*}$ by recolouring every vertex at most $2(n - 2)$ times. We can extend this sequence of recolourings to a sequence of recolourings in $G$ by recolouring $x$ and $w$ whenever $z$ is recoloured to their colour. At the end of the sequence we recolour $x$ and $w$ so that they agree in both colourings. As $x$ and $w$ are recoloured at most two more times as $z$, this completes the proof. \end{proof}

\begin{lemma}\label{lem:co-chordal}
Every $k$-colourable co-chordal graph is compact. 
\end{lemma}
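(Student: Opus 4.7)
The plan is to prove the slightly stronger claim that every non-complete co-chordal graph $G$ contains a 2-pair $\{u,v\}$ with $N_G(u) \subseteq N_G(v)$. This suffices because the class of co-chordal graphs is closed under induced subgraphs, so for every induced subgraph $H$ of $G$ either $H$ is complete (case (i) of Definition~\ref{def:colourcompact}) or, by the claim, it satisfies case (ii). In particular, case (iii) of the definition is never invoked for co-chordal graphs.

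To locate such a 2-pair I would pass to the complement $\overline{G}$, which is chordal by hypothesis. Since $G$ is not complete, $\overline{G}$ contains an edge, so some component $C$ of $\overline{G}$ has $|C|\ge 2$. The connected chordal graph $\overline{G}[C]$ has a simplicial vertex $v$ (every vertex is simplicial if $\overline{G}[C]$ is a clique, and otherwise two non-adjacent simplicial vertices exist by Dirac's theorem). Because $C$ is a whole component, $v$ is simplicial in $\overline{G}$ itself, and because $C$ is connected with $|C|\ge 2$ its neighbourhood $I_v := N_{\overline{G}}(v)$ is non-empty. Pick any $u\in I_v$. Simpliciality of $v$ says $I_v$ is a clique in $\overline{G}$, which is the same as an independent set in $G$ (containing $u$).

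The rest is two short verifications. First, $\{u,v\}$ is a 2-pair of $G$: along any chordless $u$--$v$ path $u=a_0,a_1,\dots,a_k=v$ in $G$, chordlessness forces every $a_i$ with $i\le k-2$ to be a non-neighbour of $v$ in $G$, so $\{a_0,\dots,a_{k-2}\}\subseteq I_v$; if $k\ge 3$ then the edge $a_0 a_1$ lies inside the independent set $I_v$, contradiction. Second, $N_G(u)\subseteq N_G(v)$: we have $N_G(v)=V(G)\setminus(I_v\cup\{v\})$, while $N_G(u)\cap I_v=\emptyset$ (since $u\in I_v$ and $I_v$ is independent in $G$) and $v\notin N_G(u)$, which together give the inclusion.

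The only point requiring real care is the convention for 2-pairs when $u$ happens to be isolated in $G$ and therefore has no common neighbour with $v$; here I rely on the vacuous reading of the definition (no chordless $u$--$v$ path exists, so every such path trivially has length $2$), which is precisely the convention already needed for Hayward's Lemma~\ref{lem:hayward} to apply to edgeless induced subgraphs. Beyond this small bookkeeping point, the whole argument is a one-line observation about a simplicial vertex of $\overline{G}$.
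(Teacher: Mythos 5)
Your proof is correct, but it takes a genuinely different route from the paper's. The paper leans on Lemma~\ref{lem:hayward}: a non-complete co-chordal graph is weakly chordal, hence has a 2-pair $\{x,y\}$, and if neither $N_G(x)\subseteq N_G(y)$ nor the reverse held, one could choose a private neighbour $x_1$ of $x$ and $y_1$ of $y$; the fact that $S(x,y)$ separates $x$ from $y$ forces $x_1y_1\notin E$, so the edges $xx_1$ and $yy_1$ induce a $\overline{C_4}$, contradicting co-chordality. You instead bypass Hayward's lemma entirely and build the required 2-pair from scratch: a simplicial vertex $v$ of the chordal complement has $N_{\overline G}(v)$ independent in $G$, and from this you read off directly both that $\{u,v\}$ is a 2-pair for any $u\in N_{\overline G}(v)$ and that $N_G(u)\subseteq N_G(v)$, so condition (ii) of Definition~\ref{def:colourcompact} always applies. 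Your argument is more elementary and constructive (it needs only the existence of simplicial vertices in chordal graphs, not the characterisation of weakly chordal graphs), and it proves the slightly stronger statement that every non-complete co-chordal graph has a 2-pair with nested neighbourhoods; the paper's version is shorter given that Lemma~\ref{lem:hayward} is already quoted in the preliminaries. You also make explicit two points the paper leaves implicit: the reduction to induced subgraphs via heredity of co-chordality, and the vacuous reading of the 2-pair definition when the two vertices lie in different components.
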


\begin{proof}
Let $G$ be a $k$-colourable co-chordal graph. If $G$ is a complete graph, then $G$ is compact by definition. Otherwise, since $G$ is weakly chordal, $G$ contains a 2-pair $\{x, y\}$ by Lemma \ref{lem:hayward}. If $x$ has a neighbour $x_1$ that is not a neighbour of $y$ and $y$ has a neighbour $y_1$ that is not a neighbour of $x$, then $x_1$ is not adjacent to $y_1$, as otherwise $S(x, y)$ does not separate $x$ and $y$. But then the edges $xx_1$ and $yy_1$ form $\overline{C_4}$, a contradiction. Therefore, $N_G(x) \subseteq N_G(y)$ or vice-versa and hence the graph $G$ is compact as required.  
\end{proof}

\begin{lemma}\label{lem:p5}
Every $3$-colourable $(P_5, \overline{P_5}, C_5)$-free graph is compact. 
\end{lemma}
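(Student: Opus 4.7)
The plan is to prove that every 3-colourable $(P_5, \overline{P_5}, C_5)$-free graph $G$ is either complete or contains a 2-pair witnessing condition (ii) or (iii) of Definition~\ref{def:colourcompact}; since the class is hereditary under induced subgraphs, this establishes that every induced subgraph of $G$ also satisfies one of (i)--(iii), which is exactly compactness. Assuming $G$ is non-complete, weak chordality of $G$ together with Lemma~\ref{lem:hayward} produces a 2-pair $\{x, y\}$; write $B = S(x, y)$, $A = N(x) \cap C_x$, and $A^* = N(y) \cap C_y$. From the 2-pair property and the clique bound $\omega(G) \leq 3$ (valid since $G$ is 3-colourable and perfect as a weakly chordal graph) I first derive: (a) every $a \in A$ is adjacent to every $b \in B$, for otherwise $a$-$x$-$b$-$y$ would be a chordless $P_4$; (b) each of $A$, $B$, $A^*$ is independent, since an adjacent pair in $A$ together with $x$ and any $b \in B$ would form $K_4$, and similarly for $B$ and $A^*$; (c) if $A = \emptyset$ or $A^* = \emptyset$ then $N(x) \subseteq N(y)$ or vice versa and (ii) holds at $\{x, y\}$, so henceforth $A, A^* \neq \emptyset$.

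The technical heart is to show that $A' := C_x \setminus (\{x\} \cup A)$ is empty (and symmetrically for $A'^*$, and for any further components of $G \setminus B$). Suppose $p \in A'$ and let $p - u_1 - \cdots - u_{k-1} - x$ be a shortest $p$-$x$ path; if $k \geq 4$ then its first five vertices form an induced $P_5$, a contradiction. For $k = 3$ (so $u_1 \in A'$ and $u_2 \in A$), fix $b \in B$: forbidding an induced $P_5$ on $\{p, u_1, u_2, b, y\}$ forces $pb \in E$ or $u_1 b \in E$, while forbidding an induced $\overline{P_5}$ on $\{p, u_1, u_2, x, b\}$ excludes the combination where $pb \in E$ and $u_1 b \notin E$; hence $u_1 b \in E$ for every $b \in B$. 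The resulting tight structure lets an auxiliary 2-pair witness (ii), for instance $\{u_1, x\}$ when $|A| = 1$ (giving $N(x) \subseteq N(u_1)$ directly), or a pair of $A$-vertices whose $A'$-neighbourhoods are nested when $|A| \geq 2$. The subcase $k = 2$, and the treatment of additional components of $G \setminus B$, follow by analogous forbidden-subgraph arguments, each reducing a potential obstruction to an induced $P_5$ or $\overline{P_5}$.

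Once $V(G) = \{x, y\} \cup A \cup B \cup A^*$ with $A, B, A^*$ independent and $A$-$B$ and $A^*$-$B$ complete bipartite, a short case analysis finishes the proof. If $|A| \geq 2$, any two $a_1, a_2 \in A$ are twins with common neighbourhood $\{x\} \cup B$, hence $\{a_1, a_2\}$ is a 2-pair with $N(a_1) = N(a_2)$ and (ii) holds; symmetrically $|A^*| \geq 2$ or $|B| \geq 2$ produces twin vertices realising (ii). The sole remaining case $|A| = |A^*| = |B| = 1$ gives $C_x \cup S(x, y) = \{x, a, b\}$, a triangle of size 3 by observation (a), so (iii) holds. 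The principal obstacle is the proof that $A' = \emptyset$: all three forbidden induced subgraphs together with $\omega \leq 3$ must be combined carefully, and one has to ensure that the auxiliary 2-pair produced when the initial pair $\{x, y\}$ fails both (ii) and (iii) does not itself evade both conditions.
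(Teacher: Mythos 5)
Your proposal has a genuine gap at its foundation: claim (a) --- that every $a \in A = N(x)\cap C_x$ is adjacent to every $b \in B = S(x,y)$ --- is false, and the justification offered does not work. An induced $P_4$ is not a forbidden subgraph in a $(P_5, \overline{P_5}, C_5)$-free graph, and the chordless path $a$-$x$-$b$-$y$ does not violate the 2-pair property of $\{x,y\}$ either, since it is a path from $a$ to $y$, not from $x$ to $y$. A concrete counterexample is $P_4$ itself, $a$-$x$-$b$-$y$: here $\{x,y\}$ is a 2-pair, $a \in A$, $b \in B$, and $ab \notin E$, yet the graph is $3$-colourable and $(P_5, \overline{P_5}, C_5)$-free (it is compact, but not for the reason your argument would supply). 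Since claim (b) (independence of $A$ via a $K_4$ through $x$ and some $b$), the assertion that vertices of $A$ are twins with neighbourhood $\{x\}\cup B$, the $P_5$/$\overline{P_5}$ arguments in the ``$A'=\emptyset$'' step (which need $u_2b\in E$), and the final triangle witnessing (iii) all rest on (a), the argument collapses. Beyond this, several substantial cases are asserted rather than proved: the subcase $k=2$, the components of $G\setminus B$ other than $C_x$ and $C_y$, and the ``nested neighbourhoods'' step when $|A|\geq 2$.

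The paper's proof avoids attempting a global structural description around a single arbitrary 2-pair. It chooses $\{x,y\}$ minimizing $|V(C_x)|$, uses the Trotignon--Vu\v{s}kovi\'{c} lemma (via Corollary~\ref{cor:p5}) to produce a second 2-pair $\{z,w\}$ of $G$ inside $G[S(x,y)\cup C_x]$, shows by the minimality of $C_x$ that necessarily $z,w\in S(x,y)$, and then reaches a contradiction through a bounded case analysis on the four private neighbours $x',y',z',w'$ of the cycle $x,z,y,w$, using $P_5$, $\overline{P_5}$, $C_5$ and $K_4$ (the last via 3-colourability). If you want to salvage your route you would need a correct description of the structure of $C_x$, but some device such as the minimality choice together with the second 2-pair appears essential.
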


The proof of this lemma will require a little more work. First, we need some definitions and auxiliary results. When $T$ is a set of vertices of a graph $G$, a set $D \subseteq V (G) \setminus T$ is $T$-complete if each vertex of $D$ is adjacent to each vertex of $T$. Let $D(T)$ denote the set of all $T$-complete vertices. 

\begin{lemma}[Trotignon and Vu{\v{s}}kovi{\'c} \cite{trotignon}]\label{lem:ct}
Let $G$ be a weakly chordal graph, and let $T \subseteq V(G)$ be a set of vertices such that $G[T]$ is anticonnected and $D(T)$ contains at least two non-adjacent vertices. If $T$ is inclusion-wise maximal with respect to these properties, then any chordless path of $G \setminus T$ whose ends are in $D(T)$ has all its vertices in $D(T)$. 
\end{lemma}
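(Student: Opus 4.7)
The plan is to show that every non-complete $3$-colourable $(P_5,\overline{P_5},C_5)$-free graph $G$ contains a $2$-pair satisfying condition (ii) or (iii) of Definition \ref{def:colourcompact}; together with the hereditary nature of the class, this gives compactness. Since $G$ is weakly chordal and $3$-colourable, perfection gives $\omega(G)\le 3$, so $G$ is $K_4$-free. By Lemma \ref{lem:hayward}, $G$ contains a $2$-pair $\{x,y\}$. Put $S=S(x,y)$, $A=N_G(x)\setminus S$, $B=N_G(y)\setminus S$; the sets $A$ and $B$ lie in distinct components of $G-S$, so there are no edges between them. If $A=\emptyset$ or $B=\emptyset$ then (ii) holds for $\{x,y\}$, so assume both are nonempty.

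The first structural step uses $P_5$-freeness: applied to the candidate path $a$--$x$--$s$--$y$--$b$ with $a\in A$ and $b\in B$, it forces every $s\in S$ to be adjacent to all of $A$ or to all of $B$, splitting $S=S_A\cup S_B$. Then $K_4$-freeness makes $S_A$ (and likewise $S_B$) an independent set, since two adjacent vertices of $S_A$ together with $x$ and any $a\in A$ would span a $K_4$. I would then use $\overline{P_5}$-freeness and $C_5$-freeness on five-vertex subsets of $\{x,y\}\cup A\cup B\cup S$ to restrict cross-adjacencies further, in particular to bound $|S_A\cap S_B|$ and to control how vertices of $A$ see $S_B$ (and vice versa).

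Armed with these constraints, a case analysis on $|S|$ and on the distribution of edges among $A,B,S_A,S_B$ should show that either $\{x,y\}$ itself already satisfies (iii) on one of $C_x\cup S$ or $C_y\cup S$ (in the tight cases these sets have at most three vertices and are cliques), or else the constraints above yield a vertex $a\in A$ whose closed neighbourhood is a clique of size at most three. In the latter case, any vertex $u\notin N_G[a]$ adjacent to every vertex of $N_G(a)$ (typically some $u\in S_B$ or $u=y$) gives a $2$-pair $\{a,u\}$ with $C_a=\{a\}$, so $C_a\cup S(a,u)=\{a\}\cup N_G(a)$ is a clique on at most three vertices and (iii) holds. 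Lemma \ref{lem:ct} enters the sub-case where $S$ already contains two non-adjacent vertices: extending $\{x,y\}$ to a maximal anticonnected set $T$ with $D(T)$ not a clique, the restriction that every chordless path in $G\setminus T$ with ends in $D(T)$ stays inside $D(T)$ provides the extra leverage needed either to collapse $C_x$ or $C_y$ down to a single vertex or to locate the simplicial-like vertex $a$ in $A\cup B$.

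The main obstacle is precisely this last case analysis: verifying in each admissible configuration of $|S_A|,|S_B|,|A|,|B|$ that the candidate pair $\{a,u\}$ (or the original $\{x,y\}$) is genuinely a $2$-pair and that the attendant clique condition holds. This is where $P_5$-freeness alone does not suffice, and the combined use of $\overline{P_5}$- and $C_5$-freeness together with $\omega(G)\le 3$ becomes essential to rule out longer chordless detours and to exclude forbidden configurations simultaneously.
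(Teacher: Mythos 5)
Your proposal does not prove the statement under review. Lemma \ref{lem:ct} is an assertion about an arbitrary weakly chordal graph $G$ and a maximal anticonnected set $T$ whose common neighbourhood $D(T)$ is not a clique: the claim is that every chordless path of $G \setminus T$ with both ends in $D(T)$ lies entirely inside $D(T)$. Nothing in your text engages with this claim. You never fix such a path, never use the inclusion-wise maximality of $T$ (which is the engine of any proof of this lemma), and never derive the containment conclusion. Instead, you sketch a proof of a different result --- Lemma \ref{lem:p5}, that $3$-colourable $(P_5, \overline{P_5}, C_5)$-free graphs are compact --- and in the course of that sketch you explicitly invoke Lemma \ref{lem:ct} as a known tool (``Lemma \ref{lem:ct} enters the sub-case where $S$ already contains two non-adjacent vertices\dots''). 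As an attempt at the stated lemma this is circular: the statement to be proved is assumed, and what is argued is one of its downstream applications. Note also that the lemma concerns general weakly chordal graphs; hypotheses such as $3$-colourability, $K_4$-freeness, or $(P_5, \overline{P_5}, C_5)$-freeness, around which your whole case analysis is organised, are not available.

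For calibration: the paper itself does not prove this lemma but cites it from Trotignon and Vu\v{s}kovi\'{c} \cite{trotignon}. A genuine proof runs roughly as follows. Suppose $x_1 x_2 \dots x_n$ is a chordless path of $G \setminus T$ with $x_1, x_n \in D(T)$ and some interior vertex outside $D(T)$; take a counterexample minimising $n$, pick $x_i \notin D(T)$ and $t \in T$ nonadjacent to $x_i$. Maximality of $T$ says that $T \cup \{x_1\}$ (or $T$ augmented by other $T$-complete vertices) fails one of the two defining properties, and combining this with the nonedges between $t$ and interior path vertices one exhibits a hole or an antihole in $G$, contradicting weak chordality. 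If you want to repair your submission, you must restart along these lines rather than via the compactness argument, which belongs to a different lemma.
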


The following corollary is implicit in \cite{trotignon}. 

\begin{corollary}\label{cor:p5}
Let $G$ be a weakly chordal graph that contains a chordless path $P$ of length $2$. Then there exists an anticonnected set $T$ containing the centre of $P$, such that $D(T)$ contains a 2-pair of $G$. 
\end{corollary}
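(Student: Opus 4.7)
The plan is to reduce the corollary to a single application of Lemma~\ref{lem:ct}, using the centre $c$ of $P$ as the seed of the anticonnected set. First I would observe that $T_0 := \{c\}$ is trivially anticonnected and that $D(T_0) = N_G(c)$ already contains the two (non-adjacent) endpoints of $P$; hence $T_0$ satisfies the hypotheses of Lemma~\ref{lem:ct} apart from possibly maximality. Extending $T_0$ greedily inside $V(G)$, I would obtain an inclusion-wise maximal $T \supseteq \{c\}$ such that $G[T]$ is anticonnected and $D(T)$ contains two non-adjacent vertices. Such a $T$ exists by finiteness, and being maximal over \emph{all} sets enjoying these two properties it satisfies the hypotheses of Lemma~\ref{lem:ct}.

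Next, I would extract a 2-pair inside $D(T)$. Since $G$ is weakly chordal so is its induced subgraph $G[D(T)]$, and since $D(T)$ contains two non-adjacent vertices the subgraph $G[D(T)]$ is not a clique. By Lemma~\ref{lem:hayward} it therefore contains a 2-pair $\{u,v\}$ of $G[D(T)]$: non-adjacent vertices $u,v \in D(T)$ such that every chordless $u$--$v$ path inside $G[D(T)]$ has length~$2$. Every vertex of $T$ (in particular $c$) is a common neighbour of $u$ and $v$, which will be crucial in the final step.

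It remains to upgrade $\{u,v\}$ to a 2-pair of $G$, which I expect to be the only real obstacle. Suppose for contradiction that $G$ contains a chordless $u$--$v$ path $Q = u\, q_1\, q_2\, \cdots\, q_{\ell-1}\, v$ of length $\ell \geq 3$. The key claim is that no internal vertex $q_i$ of $Q$ lies in $T$: if $q_i \in T$, then since $u,v \in D(T)$ both $u q_i$ and $v q_i$ are edges of $G$, and a short check on the index $i$ shows that for $\ell \geq 3$ at least one of these edges is a chord of $Q$, contradicting chordlessness. Hence $Q$ is a chordless path in $G \setminus T$ with endpoints in $D(T)$, so Lemma~\ref{lem:ct} forces every vertex of $Q$ into $D(T)$; but then $Q$ is a chordless $u$--$v$ path of length $\geq 3$ inside $G[D(T)]$, contradicting the choice of $\{u,v\}$. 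Consequently every chordless $u$--$v$ path of $G$ has length~$2$, so $\{u,v\} \subseteq D(T)$ is the required 2-pair of $G$.
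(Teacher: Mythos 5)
Your proposal is correct and follows essentially the same route as the paper's proof: seed $T$ with the centre of $P$, extend to an inclusion-wise maximal anticonnected set as in Lemma~\ref{lem:ct}, extract a 2-pair of $G[D(T)]$ via Lemma~\ref{lem:hayward}, and use Lemma~\ref{lem:ct} to promote it to a 2-pair of $G$. You merely spell out in more detail the final promotion step (and the fact that internal vertices of a long chordless path cannot lie in $T$), which the paper compresses into a single appeal to Lemma~\ref{lem:ct}.
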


In particular, the 2-pair can always be found in the neighbourhood of the centre of $P$. 

\begin{proof}
We start with the centre of $P$ to build our set $T$ as in Lemma \ref{lem:ct}. Then $D(T)$ is not a clique as it contains both ends of $P$. Hence, by definition of weakly chordal graphs, $D(T)$ contains a 2-pair. This 2-pair is also a 2-pair of $G$ by Lemma \ref{lem:ct}. 
\end{proof}

We are now ready to prove Lemma \ref{lem:p5}. 

\begin{proof}[Proof of Lemma \ref{lem:p5}]
Let $G$ be a 3-colourable $(P_5, \overline{P_5}, C_5)$-free graph.
Suppose towards a contradiction that $G$ is not compact. 
In particular, $G$ is not a complete graph, as otherwise $G$ would be compact by the definition. 
Since $G$ is weakly chordal, it contains a 2-pair $\{x, y\}$.  
We choose $\{x , y\}$ such that $|V(C_x)|$ is minimum over all 2-pairs $\{x, y\}$ of $G$.
  
Denote by $G'$ the subgraph of $G$ induced by the union of $S(x,y)$ and the vertices of $C_x$. 
The subgraph $G'$ is not complete, as $G$ would be compact, 
hence $G'$ contains a chordless path of length 2. Let us argue that $G'$ contains a chordless path of length 2 whose centre is, in fact, a member of $C_x$. 

If $C_x$ is not complete, then this is immediate. And if $S(x,y)$ is not complete, then it contains a pair of vertices $u$ and $v$ that are not adjacent, so we take $u$, $x$, $v$ to be our path. Hence we can assume that $C_x$ and $S(x, y)$ are both complete and, as $G'$ is not complete, there must be a vertex $u$ of $C_x$ and a vertex $v$ of $S(x, y)$ such that $u v \not\in E(G)$. Then we can take $u$, $x$, $v$ as our path and our aim is achieved. 

Applying Corollary \ref{cor:p5} with $P$ being a chordless path of length 2 whose centre is in $C_x$, we find that $G'$ contains a 2-pair $\{z, w\}$ that is also a 2-pair of $G$.

We next want to argue that $z, w\in S(x, y)$. For a contradiction, assume without loss of generality that $z$ belongs to $C_x$. This implies that $S(z, w) \subseteq V(G')$. 
So there must be a component $C_1$ of $G \setminus S(z, w)$ such that $C_1$ and $S(x, y)$ do not have a vertex in common since $y$ is adjacent to every vertex of $S(x, y)$. Therefore, we find that $C_1 \subseteq C_x$. If $C_1 = C_x$, then $S(x, y) = S(z, w)$ and hence $z, w \in C_1$ which is impossible because $S(z, w)$ separates $z$ and $w$. Therefore, $|V(C_1)| < |V(C_x)|$ holds, which contradicts our choice of $C_x$.

\begin{figure}
\begin{center}
\scalebox{1.2}{\includegraphics{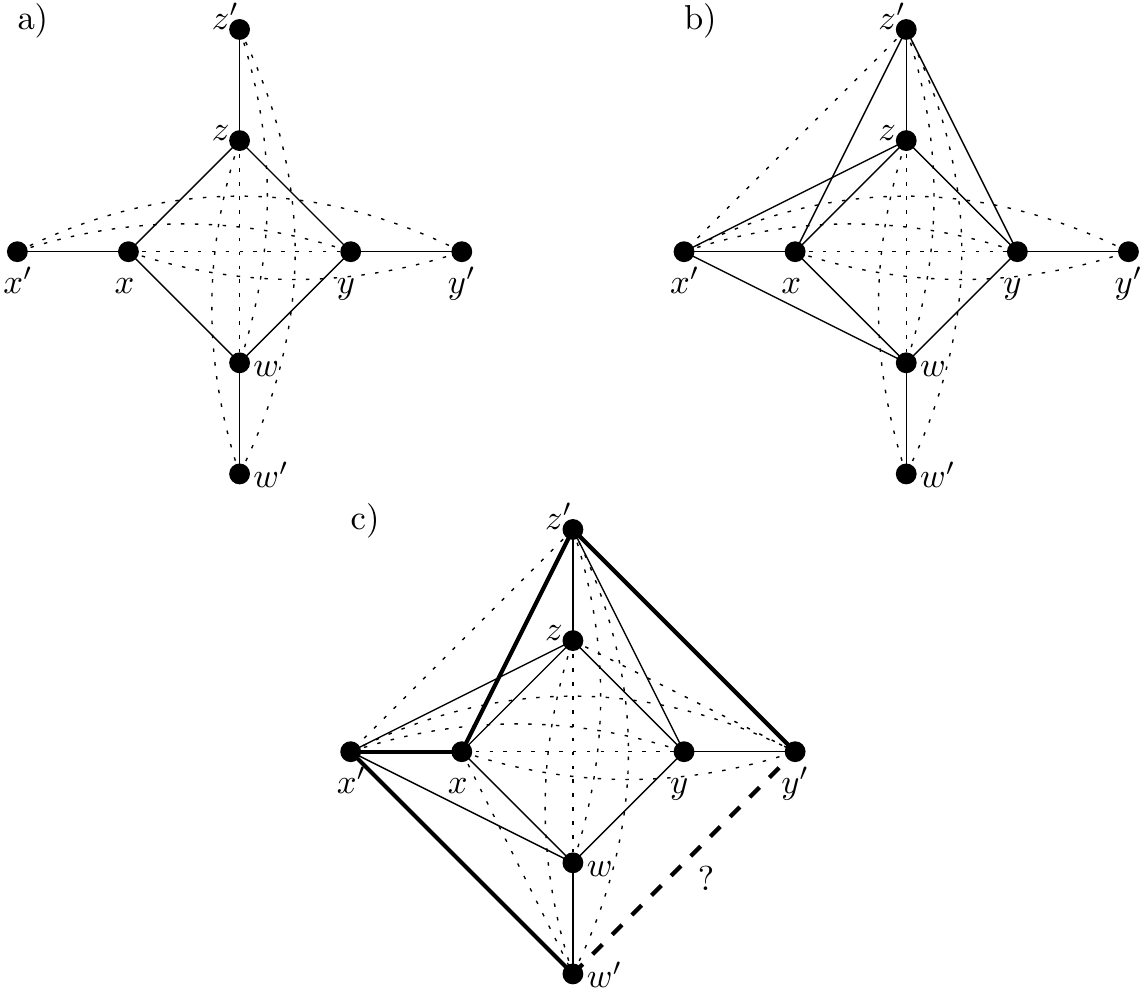}}
\end{center}
\caption{The case analysis for the proof of Lemma~\ref{lem:p5}. The dotted connections indicate nonadjacent vertices.}
\label{fig:windmill}
\end{figure}

Hence we have concluded that $z, w\in S(x, y)$. Now the vertices $x, z, y, w$ form a cycle such that $\{x, y\}$ and $\{z, w\}$ are 2-pairs. Since $G$ is not compact, $N_G(x) \not\subseteq N_G(y)$, hence there exists 
a vertex $x'$ that is adjacent to $x$ but not to $y$.
Analogously, there are vertices $y', z', w'$ such that  $yy',zz',ww' \in E$, but $xy',wz',zw' \notin E$.  If $x' = z'$, then $x'$ must be adjacent to $y$ or $w$ else $x', x, z, y, w$ form $\overline{P_5}$. But $x'$ is not adjacent to $y$ and $z'$ not adjacent to $w$, thus $x' \not = z'$. Similarly, $y' \not = w'$ and hence $x', y', w', z'$ are distinct.
Moreover, since $S(x,y)$ is a separator, we get $x'y'\notin E$ and analogously $w'z'\notin E$, see Figure~\ref{fig:windmill} a).  

If both $z'$ and $w'$ are not adjacent to $x$, then $z', z, x, w, w'$ form $P_5$. So we can assume without loss of generality that $z'$ is adjacent to $x$. If $z'$ is not adjacent to $y$, then $z', z, x, w, y$ would form $\overline{P_5}$ as $w$ is not adjacent to $z'$. Thus $z'$ is adjacent to both $x$ and $y$. 

Similarly, to avoid $P_5$ on vertices $x',x,z,y,y'$ either $x'$ or $y'$ must be adjacent to $z$ and hence also to $w$, so we assume without loss of generality that $x'$ is adjacent to $z$ and $w$.
If $x'$ is adjacent to $z'$, then $x, z, z', x'$ form $K_4$, a contradiction with the assumption the $G$ is 3-colourable, see Figure~\ref{fig:windmill} b).

To avoid a $P_5$ on the vertices $x',x, z', y, y'$, the vertices $z'$ and $y'$ are forced to be adjacent.
Similarly $x'w'\in E$ as otherwise $z',z, x', w, w'$ induce a $P_5$.
To avoid a $K_4$ on the vertices $z, z', y, y'$, the vertices $z$ and $y'$ need to be nonadjacent.
Similarly $xw'\notin E$ as otherwise $x, x', w, w'$ induce a $K_4$.

Now the vertices $w',x',x,z',y'$ induce either a $C_5$ or a $P_5$, depending whether the edge $y'w'$ is present or not, see Figure~\ref{fig:windmill} c). In either case we arrive at a contradiction and the lemma is proved.  
\end{proof}

We are aware the concept of compact graphs does not fit tight with the class of $(P_5, \overline{P_5}, C_5)$-free graphs, as some of these graphs need not to be $k$-colourable compact graphs for $k\ge 4$. 
An example of such graph $H$ for $k=4$ is depicted in Figure.~\ref{fig:counterexample2}.

\begin{figure}
\begin{center}
\scalebox{1.2}{\includegraphics{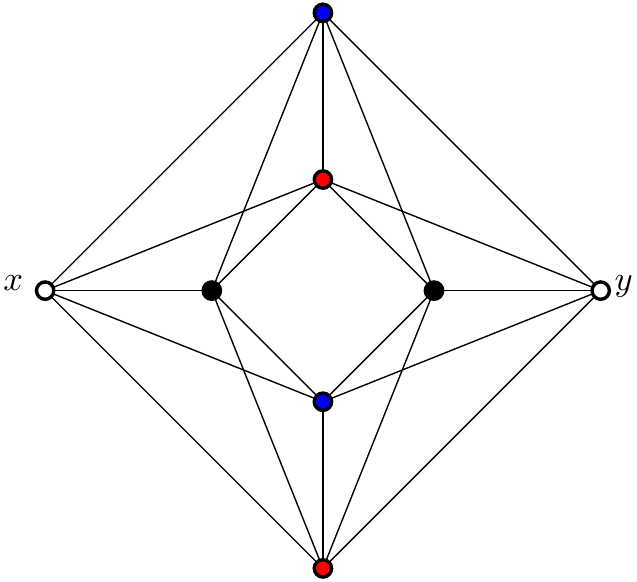}}
\end{center}
\caption{A $(P_5, \overline{P_5}, C_5)$-free 4-colourable graph $H$ that is not compact.}
\label{fig:counterexample2}
\end{figure} 

Due to symmetries of the graph $H$ it suffices without loss of generality to consider only the 2-pair $\{x,y\}$ as other 2-pairs could be mapped onto $\{x,y\}$ by an automorphism of $H$. Observe that this 2-pair violates the conditions of the definition~\ref{def:colourcompact} for $H$ to be 4-colourable compact.

Any choice of five vertices from $H$ would contain two vertices joined by a horizontal or a vertical edge, and such edge cannot be extended to an induced $P_3$, hence $H$ is also $P_5$-free.
Also, such choice of five vertices would contain two opposite vertices either of the inner $C_4$ or from the outer one, like the vertices $x$ and $y$. As such two vertices form an $2$-pair, $H$ contains no $C_5$. 
Finally, $H$ has only two induced $C_4$ and neither could be completed by any fifth vertex to a $\overline{P_5}$.

\section{Concluding remarks}

 We end this note with two open problems. 
 
 \begin{problem}
For which integer $\ell > k + 1$  is the $\ell$-colour diameter of $k$-colourable weakly chordal graphs connected? 
 \end{problem}
 
 
 \begin{problem}
 Is it true that the $(k+1)$-colour diameter of $k$-colourable $(P_5, \overline{P_5}, C_5)$-free graphs is quadratic for each $k \geq 4$? 
 \end{problem}

 \bibliography{bibliography}{}

\begin{thebibliography}{10}

\bibitem{treewidth}
M.~Bonamy and N.~Bousquet.
\newblock Recoloring graphs via tree decompositions.
\newblock {\em arXiv}, 1403.6386, 2014.

\bibitem{bonamy13}
M.~Bonamy and N.~Bousquet.
\newblock Recoloring graphs via tree decompositions.
\newblock {\em European Journal of Combinatorics}, 69:200--213, 2018.

\bibitem{bonamy}
M.~Bonamy, M.~Johnson, I.~M. Lignos, V.~Patel, and D.~Paulusma.
\newblock Reconfiguration graphs for vertex colourings of chordal and chordal
  bipartite graphs.
\newblock {\em Journal of Combinatorial Optimization}, 27:132--143, 2014.

\bibitem{bonsma}
P.~Bonsma and L.~Cereceda.
\newblock Finding paths between graph colourings: {PSPACE}-completeness and
  superpolynomial distances.
\newblock {\em Theoretical Computer Science}, 410(50):5215--5226, 2009.

\bibitem{bousquet}
N.~Bousquet and G.~Perarnau.
\newblock Fast recoloring of sparse graphs.
\newblock {\em European Journal of Combinatorics}, 52:1--11, 2016.

\bibitem{brewster}
R.~C. Brewster, S.~McGuinness, B.~Moore, and J.~A. Noel.
\newblock A dichotomy theorem for circular colouring reconfiguration.
\newblock {\em Theoretical Computer Science}, 639:1--13, 2016.

\bibitem{johnson1}
L.~Cereceda, J.~van~den Heuvel, and M.~Johnson.
\newblock Connectedness of the graph of vertex-colourings.
\newblock {\em Discrete Mathematics}, 308:913--919, 2008.

\bibitem{johnson2}
L.~Cereceda, J.~van~den Heuvel, and M.~Johnson.
\newblock Mixing 3-colourings in bipartite graphs.
\newblock {\em European Journal of Combinatorics}, 30(7):1593--1606, 2009.

\bibitem{3colouring}
L.~Cereceda, J.~van~den Heuvel, and M.~Johnson.
\newblock Finding paths between 3-colourings.
\newblock {\em Journal of Graph Theory}, 67(1):69--82, 2011.

\bibitem{chudnovsky}
M.~Chudnovsky, N.~Robertson, P.~Seymour, and R.~Thomas.
\newblock The strong perfect graph theorem.
\newblock {\em Annals of mathematics}, pages 51--229, 2006.

\bibitem{feghali2}
C.~Feghali, M.~Johnson, and D.~Paulusma.
\newblock A reconfigurations analogue of {B}rooks' theorem and its
  consequences.
\newblock {\em Journal of Graph Theory}, 83(4):340--358, 2016.

\bibitem{hayward}
R.~Hayward, C.~Ho{\`a}ng, and F.~Maffray.
\newblock Optimizing weakly triangulated graphs.
\newblock {\em Graphs and Combinatorics}, 5(1):339--349, 1989.

\bibitem{nishimura}
N.~Nishimura.
\newblock Introduction to reconfiguration.
\newblock {\em Algorithms}, 11(4):52, 2018.

\bibitem{trotignon}
N.~Trotignon and K.~Vu{\v{s}}kovi{\'c}.
\newblock On {R}oussel--{R}ubio-type lemmas and their consequences.
\newblock {\em Discrete Mathematics}, 311(8-9):684--687, 2011.

\bibitem{heuvel}
J.~{{van den}}~Heuvel.
\newblock The complexity of change.
\newblock {\em Surveys in Combinatorics 2013, edited by S.~R.~Blackburn,
  S.~Gerke, and M.~Wildon, London Mathematical Society Lecture Notes Series},
  409, 2013.

\end{thebibliography}
\bibliographystyle{abbrv}
 
\end{document}